\def\NZQ{\mathbb}               
\def\RR{{\NZQ R}}
\def\frk{\mathfrak}               
\def\Phi{{\frk N}}
\def\ab{{\mathbf a}}
\def\xb{{\mathbf x}}
\def\yb{{\mathbf y}}
\def\opn#1#2{\def#1{\operatorname{#2}}} 
\opn\chara{char} 
\opn\length{\ell} 
\opn\pd{pd} 
\opn\rk{rk}
\opn\projdim{proj\,dim} 
\opn\injdim{inj\,dim} 
\opn\rank{rank}
\opn\depth{depth} 
\opn\grade{grade} 
\opn\height{height}
\opn\embdim{emb\,dim} 
\opn\codim{codim}
\opn\Tr{Tr} 
\opn\bigrank{big\,rank}
\opn\superheight{superheight}
\opn\lcm{lcm}
\opn\trdeg{tr\,deg}
\opn\reg{reg} 
\opn\lreg{lreg} 
\opn\ini{in} 
\opn\lpd{lpd}
\opn\size{size}
\opn\mult{mult}
\opn\dist{dist}
\opn\cone{cone}
\opn\lex{lex}
\opn\rev{rev}
\opn\div{div} \opn\Div{Div} \opn\cl{cl} \opn\Cl{Cl}
\opn\Spec{Spec} \opn\Supp{Supp} \opn\supp{supp} \opn\Sing{Sing}
\opn\Ass{Ass} \opn\Min{Min}
\opn\Ann{Ann} \opn\Rad{Rad} \opn\Soc{Soc}
\opn\Syz{Syz} \opn\Im{Im} \opn\Ker{Ker} \opn\Coker{Coker}
\opn\Am{Am} \opn\Hom{Hom} \opn\Tor{Tor} \opn\Ext{Ext}
\opn\End{End} \opn\Aut{Aut} \opn\id{id} \opn\ini{in}
\opn\nat{nat}
\opn\pff{pf}
\opn\Pf{Pf} \opn\GL{GL} \opn\SL{SL} \opn\mod{mod} \opn\ord{ord}
\opn\Gin{Gin}
\opn\Hilb{Hilb}\opn\adeg{adeg}\opn\std{std}\opn\ip{infpt}
\opn\Pol{Pol}
\opn\sat{sat}
\opn\Var{Var}
\opn\Gen{Gen}
\opn\aff{aff} \opn\con{conv} \opn\relint{relint} \opn\st{st}
\opn\lk{lk} \opn\cn{cn} \opn\core{core} \opn\vol{vol}
\opn\link{link} \opn\star{star}
\opn\gr{gr}
\def\pot#1#2{#1[\kern-0.28ex[#2]\kern-0.28ex]}
\opn\dirlim{\underrightarrow{\lim}}
\opn\inivlim{\underleftarrow{\lim}}
\def\Implies{\ifmmode\Longrightarrow \else
        \unskip${}\Longrightarrow{}$\ignorespaces\fi}
\def\implies{\ifmmode\Rightarrow \else
        \unskip${}\Rightarrow{}$\ignorespaces\fi}
\def\iff{\ifmmode\Longleftrightarrow \else
        \unskip${}\Longleftrightarrow{}$\ignorespaces\fi}
\newtheorem{Theorem}{Theorem}[section]
\newtheorem{Lemma}[Theorem]{Lemma}
\newtheorem{Proposition}[Theorem]{Proposition}
\theoremstyle{definition}
\let\epsilon\varepsilon
\let\phi=\varphi
\let\kappa=\varkappa
\opn\dis{dis}
\opn\height{height}
\opn\dist{dist}
\def\pnt{{\raise0.5mm\hbox{\large\bf.}}}
\opn\Lex{Lex}
\begin{document}

\title{The number of $4$-cycles and the cyclomatic number of \\
a finite simple graph
}
\author{Takayuki Hibi, Aki Mori and Hidefumi Ohsugi}

\address{Takayuki Hibi,
Department of Pure and Applied Mathematics,
Graduate School of Information Science and Technology,
Osaka University,
Suita, Osaka 565-0871, Japan}
\email{hibi@math.sci.osaka-u.ac.jp}

\address{Aki Mori,
Department of Pure and Applied Mathematics,
Graduate School of Information Science and Technology,
Osaka University,
Suita, Osaka 565-0871, Japan} 
\email{akimk1214@gmail.com }

\address{Hidefumi Ohsugi,
Department of Mathematical Sciences,
School of Science,
Kwansei Gakuin University,
Sanda, Hyogo 669-1337, Japan} 
\email{ohsugi@kwansei.ac.jp}

\subjclass[2010]{05C30}
\keywords{finite simple graph,  number of cycles, cyclomatic number}

\begin{abstract}
Let $G$ be a finite connected simple graph with $n$ vertices and $m$ edges. 
We show that, when $G$ is not bipartite, 
the number of $4$-cycles contained in $G$ is
at most $\binom{m-n+1}{2}$.
We further provide a short combinatorial proof of the bound $\binom{m-n+2}{2}$ which 
holds for bipartite graphs.
\end{abstract}

\maketitle

\section{Introduction}
In enumerative combinatorics on finite graphs, counting the number of prescribed subgraphs contained in a finite simple graph is one of the most traditional problems. 
One of the earliest paper on this topic is due to Erd\H{o}s.
Erd\H{o}s  \cite{Erdos} determined the maximum number of complete subgraphs $K_t$ contained in $K_r$-free graphs.
There are many papers in this field of study.
Early references for general graphs are \cite{Alon01, Alon02}.
Alon \cite{Alon01, Alon02} studied the number of subgraphs of prescribed type of graphs with
$m$ edges.
As examples of the results on particular graphs, we give some references on the number of paths in graphs.
Ahlswede and Katona \cite{AK} studied the maximum number of 2-edge paths in graphs with $n$ vertices and $m$ edges. 
Bollob\'{a}s and Sarkar \cite{BS} determined the maximum number of 4-edge paths in graphs with $m$ edges. 
Nagy \cite{Nagy} studied the minimum/maximum number of 4-edge paths in graphs with given edge density.

Throughout this paper, we assume that 
every graph is finite and simple.
Let $G$ be a connected graph with $n$ vertices and $m$ edges.  
Let $c(G)$ denote the number of cycles contained in $G$.
There are a large literature 
on the number of prescribed subgraphs contained in a graph
even if we restrict a prescribed subgraph to a cycle.
For example, Ahrens \cite{Ahrens} proved that
$$
m-n+1 \le c(G) \le 2^{m-n+1} -1,
$$  
where $m-n+1$ is called the {\em cyclomatic number} or the {\em first Betti number} of $G$.
Let $c_4(G)$ denote the number of $4$-cycles in $G$.  
Alon \cite[Special case of Corollary 2.1]{AlonBip} showed that, for every fixed $\varepsilon>0$, and for any graph with $n$ vertices
and at least $\varepsilon n^2$ edges, $(1/2+o(1))\binom{n}{2}^2 (2 \varepsilon)^4 \le c_4(G)$,
where the $o(1)$ terms tend to $0$ as $n$ tends to infinity.
On the other hand, 
Hakimi and Schmeichel \cite[Theorem 2]{HS} showed that,
if $G$ is a planar graph with $n \ge 4$ vertices, then
$c_4(G) \le (n^2+3n-22)/2$.

In the present paper, we study 
upper bounds for $c_4(G)$ for a graph $G$
in terms of the cyclomatic number $m-n+1$ of $G$.  
Applying a result on commutative algebra given by Herzog et al.
\cite[Corollary~2.6]{HSZ} to the {\em edge ring} (\cite[Chapter~5]{binomialideals}) of $G$,
it follows that
\begin{equation}
\label{Sanda}
c_4(G)
 \leq 
\begin{cases}
\binom{m-n+2}{2} & \mbox{if } G  \mbox{ is bipartite},\\
\binom{m-n+1}{2} + k_4(G) & \mbox{otherwise,}
\end{cases}
\end{equation}
where $k_4(G)$ is the number of complete graphs $K_4$ in $G$.
We give the details in Appendix~\ref{omake} for readers who are interested in
the bridges between the algebraic result and the combinatorial statement.

The first main purpose of the present paper is 
to give a purely combinatorial proof of (\ref{Sanda})
for the bipartite case, and characterise the extremal graphs.

\begin{Proposition}
\label{anotherproof}
Let $G$ be a connected bipartite graph with $n$ vertices and $m$ edges.
Then 
\begin{equation}
\label{Sydney}
c_4(G) \le \binom{m-n+2}{2}.
\end{equation}
When $G$ has no vertices of degree $1$, 
equality holds if and only if $G$ is the complete bipartite graph $K_{2, n-2}$.
\end{Proposition}

The second main purpose of the present paper is to give
an improvement of (\ref{Sanda}) for the non-bipartite case.

\begin{Theorem}
\label{main}
Let $G$ be a connected graph with $n$ vertices and $m$ edges.
Suppose that $G$ has at least one odd cycle.  Then 
\begin{equation}
\label{Boston}
c_4(G) \le \binom{m-n+1}{2}.
\end{equation}
\end{Theorem}

Note that equality holds in (\ref{Boston}) if $G$ is 
the complete graph $K_n$ on $n \ge 3$ vertices.
In fact, 
\begin{equation}
c_4(K_n)  
= 3 \cdot \binom{n}{4}
=\frac{n(n-1)(n-2)(n-3)}{8}= \binom{\binom{n}{2}-n+1}{2}. \label{completegrapheformula}
\end{equation}
In addition, if $G$ is a graph 
obtained by gluing $C_4$ and an odd cycle
along a vertex, then $m-n=1$ and 
$c_4(G) = 1 = \binom{m-n+1}{2}$.

The present paper is organized as follows.
In Section 2, after we introduce some results on lattice polytopes arising from graphs,
a purely combinatorial proof of Proposition \ref{anotherproof} will be achieved.
Finally, in Section 3, we will show Theorem \ref{main} for  nonbipartite graphs.

\section{Upper bounds for bipartite graphs}

In the present section, we give a purely combinatorial proof of Proposition \ref{anotherproof} for bipartite graphs.
First, we introduce some results on lattice polytopes associated with graphs.
Given a finite set of vectors $X=\{\ab_1,\dots,\ab_m\} \subset \RR^n$,
the set
\begin{eqnarray}
{\rm conv}(X) : = \left\{ \sum_{i=1}^m \lambda_i \ab_i \in \RR^n\ \left| \ 
0 \le \lambda_i \in \RR, \  \sum_{i=1}^m \lambda_i  =1 \right. \right\} \label{convx}
\end{eqnarray}
is called the {\em convex hull} of $X$.
A subset $P \subset \RR^n$ is called a {\em polytope}
if there exists a finite set $X \subset \RR^n$ such that $P={\rm conv}(X)$.
The {\it dimension} of a polytope $ {\rm conv} (X)$ in (\ref{convx}) is
the dimension of a vector space
that is a translate of 
$
\left\{ \sum_{i=1}^m \lambda_i \ab_i \in \RR^n\ \left| \ 
\lambda_i \in \RR, \ \sum_{i=1}^m \lambda_i  =1 \right. \right\}.
$
A subset $F$ of a polytope $P \subset \RR^n$ is called a \ {\it face} of $P$
if there exists a vector ${\bf w} \in \RR^n$ such that 
$F = \{ \xb \in P \mid {\bf w} \cdot \yb \le {\bf w} \cdot \xb 
\mbox{ for any } \yb \in P\}$.
It is known that each face of a polytope is again a polytope.
In addition, there are only finitely many faces of a polytope.
The graph $G(P)$ of a polytope $P \subset \RR^n$ is a graph 
whose vertex set consists of 0-dimensional faces of $P$
and whose edge set consists of 1-dimensional faces of $P$.
If $\alpha$ is a vertex of $G(P)$ whose neighbours are
$\alpha_1,\dots,\alpha_s$ in $G(P)$, then 
it is not difficult to see that
$$
P \subset 
\left\{
\left.
\alpha +   \sum_{i=1}^s \lambda_i (\alpha_i - \alpha) \in \RR^n\ \right| \ 0\le \lambda_i \in \RR  
\right\}
.$$
From this fact, $\dim P$ is equal to
the dimension of the vector space spanned by
$\alpha_1 - \alpha, \ldots, \alpha_s - \alpha$.
Hence we have a fundamental fact on $G(P)$.

\begin{Lemma}
\label{graphGP}
Let $P \subset \RR^n$ be a $d$-dimensional polytope.
Then 
every vertex of $G(P)$ has degree at least $d$.
\end{Lemma}

A $d$-dimensional polytope $P \subset \RR^n$ is called {\em simple} if 
every vertex of the graph $G(P)$ of $P$ has degree $d$.
A $d$-dimensional polytope $P \subset \RR^n$ is called a {\em simplex} if 
$G(P)$ has $d+1$ vertices.
Since the graph $G(P)$ of a $d$-dimensional simplex $P$ is the complete graph $K_{d+1}$,
any simplex is simple.
See \cite[Chapter 3]{Zbook} for details on graphs of polytopes.

Given a graph $G$ on the vertex set $\{v_1,\ldots,v_n\}$
with the edge set $E(G)$, let ${\mathcal P}_G \subset \RR^n$ denote the convex hull of
$\{{\bf e}_i + {\bf e}_j \ | \ \{v_i, v_j\} \in E(G)\}$, where ${\bf e}_i$ is
the $i$th unit coordinate vector in $\RR^n$.
The polytope ${\mathcal P}_G$ is called the {\em edge polytope} of $G$.
See \cite[Chapter 5]{binomialideals} for details on edge polytopes.
A characterization of graphs whose edge polytope is a simplex is known  \cite[Lemma 5.5]{binomialideals}.
Simple edge polytopes are classified in Ohsugi and Hibi \cite[Corollary 5.4]{simpleedgepolytope}.

\begin{Proposition}
\label{simplepolytope}
Let $G$ be a connected graph.
Then, 
\begin{itemize}
    \item[{\rm (i)}] 
${\mathcal P}_G$ is simple if and only if 
either ${\mathcal P}_G$ is a simplex or $G$ is a complete bipartite graph{\rm ;}
    \item[{\rm (ii)}] 
${\mathcal P}_G$ is a simplex if and only if either $G$ is a tree or
$G$ contains exactly one odd cycle and it is a unique cycle of $G$. 
\end{itemize}
\end{Proposition}

Let $f_1(G)$ be the number of the $1$-dimensional faces of ${\mathcal P}_G$,
that is, the number of the edges of $G({\mathcal P}_G)$.
Several bounds for $f_1(G)$ are given in Hibi et al. \cite{HMOS}
and Tran and Ziegler \cite{TZ}.
In particular, the following proposition appears in Tran and Ziegler \cite[Proposition 9]{TZ}.

\begin{Proposition}
\label{TZ}
Let $G$ be a graph with $m$ edges.
Then $$f_1(G) = \frac{m(m-1)}{2} - 2c_4(G)+3k_4(G).$$
\end{Proposition}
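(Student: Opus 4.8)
The plan is to analyze the edge polytope $\mathcal{P}_G$ pair of vertices by pair of vertices and to decide, for each pair, whether the connecting segment is a $1$-face. First I would record that the $e$ points $\mathbf{e}_i + \mathbf{e}_j$, $\{i,j\}\in E(G)$, are pairwise distinct $0/1$-vectors of coordinate sum $2$, and that each of them is a vertex of $\mathcal{P}_G$ (none is a convex combination of the others, as a glance at coordinate sums shows); hence $\mathcal{P}_G$ has exactly $e$ vertices and $\binom{e}{2}$ candidate edges. The identity will then follow by subtracting from $\binom{e}{2}$ the number of non-adjacent pairs and by matching that count against $c_4(G)$ and $k_4(G)$.

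The key step is an adjacency criterion. Writing $p_f = \mathbf{e}_i + \mathbf{e}_j$ for $f = \{i,j\}$, the segment $[p_f, p_g]$ is an edge of $\mathcal{P}_G$ exactly when the midpoint $\tfrac12(p_f+p_g)$ admits only the trivial convex representation in the vertices of $\mathcal{P}_G$. Inspecting a coordinate outside the set $S$ of vertices of $G$ covered by $f\cup g$ forces every vertex occurring in such a representation to come from an edge contained in $S$; thus the whole question localizes to the at most four vertices of $S$. If $f$ and $g$ share a vertex ($|S| = 3$), a direct coordinate computation shows the representation is unique, so $[p_f,p_g]$ is always an edge. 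If $f$ and $g$ are disjoint ($|S| = 4$, say $S = \{a,b,c,d\}$ with $f = \{a,b\}$, $g = \{c,d\}$), the only rival representations arise from the two other perfect matchings $\{ac,bd\}$ and $\{ad,bc\}$ of $S$: the pair is non-adjacent precisely when at least one of these matchings lies entirely in $G$, and otherwise another short coordinate computation shows the representation is again trivial.

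With the criterion in hand, I would localize the count to $4$-element vertex sets $S$. Let $m(S)\in\{0,1,2,3\}$ be the number of perfect matchings of $S$ contained in $G$. Each $4$-cycle of $G$ spanned by $S$ is the union of two such matchings, so $S$ carries $\binom{m(S)}{2}$ of the $4$-cycles, while $S$ spans a $K_4$ exactly when $m(S) = 3$. On the other hand, the non-adjacent pairs spanning $S$ are exactly the matchings of $S$ in $G$ that admit a second such matching, which number $m(S)$ when $m(S)\ge 2$ and $0$ otherwise. Comparing the values in the four cases $m(S) = 0,1,2,3$ gives the local identity ``number of non-adjacent pairs spanning $S$'' $= 2\binom{m(S)}{2} - 3\,[\,m(S)=3\,]$; summing over all $S$ yields that the total number of non-adjacent pairs equals $2c_4(G) - 3k_4(G)$, and $f_1(G) = \binom{e}{2} - \bigl(2c_4(G) - 3k_4(G)\bigr)$ is the asserted formula.

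The main obstacle is the adjacency criterion, and specifically its converse (that the absence of a second matching forces $[p_f,p_g]$ to be a genuine edge). What makes this manageable is the localization to $S$: once one knows every competing vertex must come from an edge inside the four-point set $S$, the constraint that neither $\{ac,bd\}$ nor $\{ad,bc\}$ lies in $G$ leaves only a handful of small coordinate systems to solve, each immediately seen to have only the trivial solution. I would present these as one symmetric computation rather than case by case, and I would sanity-check the final formula against $K_4$ (octahedron, $f_1 = 12$) and $C_4$ (quadrilateral, $f_1 = 4$).
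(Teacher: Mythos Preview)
The paper does not prove this proposition at all; it simply quotes it from Tran--Ziegler \cite{TZ} (``the following proposition appears in \cite{TZ}'') and uses it as a black box. So there is nothing in the paper to compare your argument against.

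That said, your proposal is correct and is essentially the standard derivation: the adjacency criterion for vertices of $\mathcal{P}_G$ is exactly that two disjoint edges $f=\{a,b\}$, $g=\{c,d\}$ fail to span a $1$-face if and only if one of the two rival matchings $\{ac,bd\}$ or $\{ad,bc\}$ lies in $G$, while incident edges always span a $1$-face; your localized count over $4$-sets with $m(S)$ matchings (non-adjacent pairs $=0,0,2,3$ for $m(S)=0,1,2,3$, matching $2\binom{m(S)}{2}-3[m(S)=3]$) then sums to $2c_4(G)-3k_4(G)$, giving the formula.

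One small wording issue: saying the points $\mathbf{e}_i+\mathbf{e}_j$ are vertices ``as a glance at coordinate sums shows'' is not the right justification, since every such point has coordinate sum $2$. What actually works is the $0/1$ argument you use a moment later for localization: if $\mathbf{e}_i+\mathbf{e}_j=\sum_k\lambda_k p_{f_k}$ with $\lambda_k>0$, then looking at coordinate $i$ forces $i\in f_k$ for all $k$, and likewise $j\in f_k$, so every $f_k=\{i,j\}$. Your sanity checks are fine ($f_1(K_4)=\binom{6}{2}-2\cdot3+3=12$, $f_1(C_4)=\binom{4}{2}-2=4$).
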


Note that Propositions~\ref{simplepolytope} (i) and \ref{TZ} are proved by graph theoretical arguments based on the following
fact:
two distinct vertices 
${\bf e}_i + {\bf e}_j$,
${\bf e}_k + {\bf e}_\ell$ of 
${\mathcal P}_G$ are adjecent in $G({\mathcal P}_G)$
if and only if the induced subgraph of $G$ on the vertex
set $\{ v_i, v_j,v_k,v_\ell\}$ has no $C_4$.
By using Propositions~\ref{simplepolytope} and \ref{TZ},
we have the following proposition.

\begin{Proposition}
\label{fromTZ}
Let $G$ be a connected graph with $n$ vertices and $m$ edges.
\begin{itemize}
\item[(i)]
If $G$ is a bipartite graph, then 
$$
c_4(G) \le \frac{m(m-n+1)}{4},
$$
and equality holds if and only if $G$ is a tree or a complete bipartite graph.

\item[(ii)]
If $G$ has at least one odd cycle and no $K_4$, then 
$$
c_4(G) \le \frac{m(m-n)}{4},
$$
and equality holds if and only if $G$ contains exactly one cycle and the length of the cycle is odd.
\item[(iii)]
If $G$ has at least one $K_4$, then 
$$
c_4(G) < \frac{m(m-n)}{4} + \frac{3}{2}k_4(G).
$$

\end{itemize}
\end{Proposition}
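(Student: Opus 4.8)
The plan is to feed Proposition \ref{TZ} into the geometry of the edge polytope $\mathcal P_G$. Three standard facts are needed: $\mathcal P_G$ has exactly $e$ vertices (they are distinct because $G$ is simple), its dimension equals $d-1$ if $G$ has an odd cycle and $d-2$ if $G$ is bipartite, and $f_1(G)$ is its number of edges. Since each vertex of a $D$-polytope lies on at least $D$ edges, double counting gives $f_1(G)\ge De/2$, where $D=\dim\mathcal P_G$, with equality if and only if $\mathcal P_G$ is a simple polytope. Substituting into $f_1(G)=\binom{e}{2}-2c_4(G)+3k_4(G)$ and using $k_4(G)=0$ whenever $G$ is triangle-free (in particular when $G$ is bipartite), elementary arithmetic yields $c_4(G)\le \frac{e(e-d+1)}{4}$ when $D=d-2$, and $c_4(G)\le\frac{e(e-d)}{4}+\frac32 k_4(G)$ when $D=d-1$; this gives (i), (ii), and the non-strict form of (iii). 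Moreover each of these inequalities is an equality precisely when $\mathcal P_G$ is simple.

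So everything reduces to deciding which connected graphs have a simple edge polytope. The easy direction: if $G$ is a tree, or if $G$ has exactly one cycle and that cycle is odd, then the $e$ incidence vectors $\mathbf e_i+\mathbf e_j$ are linearly, hence affinely, independent (the incidence matrix of such a $G$ has rank $e$), so $\mathcal P_G$ is an $(e-1)$-simplex and is simple; and $\mathcal P_{K_{\ell,m}}=\Delta^{\ell-1}\times\Delta^{m-1}$, a product of simplices, is simple. Reading these three families against the hypotheses of (i)--(iii) reproduces the stated equality cases: in the bipartite case, trees and complete bipartite graphs; in the odd-cycle, $K_4$-free case, the graphs with exactly one (odd) cycle; and in the $K_4$ case, none of these occur, so there the inequality is strict.

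The crux is the converse: that no further graph has a simple edge polytope, and especially that a $K_4$ rules it out. For the latter there is a short argument: if a $K_4$ lies on $\{a,b,c,d\}$, maximizing $x_a+x_b+x_c+x_d$ over $\mathcal P_G$ cuts out the face spanned by the vertices $\mathbf e_i+\mathbf e_j$ with $\{i,j\}\subseteq\{a,b,c,d\}$, which is exactly $\mathcal P_{K_4}$; this polytope is the octahedron, since its three pairs of ``opposite'' vertices $\{v_{ab},v_{cd}\}$, $\{v_{ac},v_{bd}\}$, $\{v_{ad},v_{bc}\}$ all share the midpoint $\tfrac12(\mathbf e_a+\mathbf e_b+\mathbf e_c+\mathbf e_d)$, and the octahedron is not simple; as every face of a simple polytope is simple, $\mathcal P_G$ is not simple and (iii) is strict. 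For the bipartite and odd-cycle cases one argues instead at the level of vertices of $\mathcal P_G$: for an edge $e=\{a,b\}$ of $G$, the vertex $v_e$ is automatically joined in $\mathcal P_G$ to $v_f$ for every edge $f$ meeting $e$ (there is no nontrivial way to write $\tfrac12(v_e+v_f)$ as a convex combination of vertices of $\mathcal P_G$ once $e\cap f\neq\emptyset$), so $v_e$ lies on at least $\deg a+\deg b-2$ edges, and one shows that any cyclic structure beyond the listed graphs forces some such $v_e$ onto strictly more than $\dim\mathcal P_G$ edges. I expect this vertex-counting step to be the main technical obstacle; it can also be bypassed by quoting the relevant lower bound for $f_1(G)$, together with its equality description, from \cite{HMOS, TZ}, after which the proof is just the arithmetic of the first paragraph plus the octahedron observation.
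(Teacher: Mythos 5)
Your proposal is correct and follows essentially the same route as the paper: combine Proposition \ref{TZ} with the dimension of $\mathcal P_G$ and the double count $f_1(G)\ge e\dim\mathcal P_G/2$ (equality iff $\mathcal P_G$ is simple), then invoke the classification of simple edge polytopes from \cite{simpleedgepolytope} for the equality cases --- which is exactly the citation the paper uses, so the ``technical obstacle'' you flag is legitimately discharged. Your octahedron argument for why a $K_4$ destroys simplicity is a nice self-contained addition not present in the paper, but it is not needed once the classification is quoted.
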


\begin{proof}
By Proposition \ref{TZ}, we have
\begin{equation}
\label{fromTZshiki}
c_4(G) = \frac{1}{2} \left(\frac{m(m-1)}{2} - f_1(G)+3k_4(G)\right)
= \frac{m(m-1)}{4} -  \frac{1}{2}f_1(G) + \frac{3}{2}k_4(G).
\end{equation}
It is known \cite[Lemmas 5.2 and 5.4]{binomialideals} that the dimension of ${\mathcal P}_G$ is
$$
\dim {\mathcal P}_G =
\begin{cases}
n-2 & \mbox{if } G \mbox{ is bipartite,}\\
n-1 &  \mbox{otherwise,}
\end{cases}
$$
and that 
$\{{\bf e}_i + {\bf e}_j \ | \ \{v_i, v_j\} \in E(G)\}$ is the set of all vertices (0-dimensional faces) of
 ${\mathcal P}_G$.
Hence, the graph $G({\mathcal P}_G)$ of ${\mathcal P}_G$ has $m$ vertices.
From Lemma \ref{graphGP}, the degree of each vertex of $G({\mathcal P}_G)$ is at least  $\dim {\mathcal P}_G$.
%
Therefore, 
$$
f_1(G) \ge 
\begin{cases}
\frac{m(n-2)}{2} &\mbox{if }  G \mbox{ is bipartite,}\\
\frac{m(n-1)}{2} &  \mbox{otherwise.}\\
\end{cases}
$$
Thus by (\ref{fromTZshiki}), we have
$$
c_4(G)  \le 
\begin{cases}
\frac{m(m-n+1)}{4} & \mbox{if } G \mbox{ is bipartite,}\\
\frac{m(m-n)}{4} + \frac{3}{2}k_4(G)&  \mbox{otherwise,}\\
\end{cases}
$$
and equality holds if and only if the edge polytope of $G$ is a simple polytope,
which is characterized in Proposition \ref{simplepolytope}.
In particular, if the edge polytope of $G$ is simple,
then $G$ has no $K_4$.
\end{proof}

Now we turn to proving Proposition \ref{anotherproof}, which we do inductively. First we set some notation.
A vertex $v$ of a connected graph $G$ is called a {\em cut vertex}
if the graph obtained by the removal of $v$ from $G$ is disconnected.
An edge $e$ of a connected graph $G$ is called a {\em bridge}
if the graph obtained by the removal of $e$ from $G$ is disconnected.
Let
$$
\varepsilon(G) =
\begin{cases}
1 & \mbox{if } G \mbox{ is bipartite},\\
0 & \mbox{otherwise}.
\end{cases}
$$
By the following lemma, we may assume that a graph $G$ 
has no cut vertices.

\begin{Lemma}
\label{2connected}
Suppose that connected graphs $G_1$ and $G_2$ have exactly one common vertex,
and each $G_i$ has $n_i$ vertices and $m_i$ edges.
Let $G $
be the graph $G_1 \cup G_2$ with $n=n_1 +n_2-1$ vertices and $m=m_1+m_2$ edges.
If
$$
c_4(G_i) \le 
\binom{m_i-n_i+1+\varepsilon(G_i)}{2}
$$
for $i=1,2$,
then we have 
\begin{equation}
\label{awase}
c_4(G) \le \binom{m -n +1+\varepsilon(G)}{2},
\end{equation}
and equality holds if and only if 
$c_4(G_i) =
\binom{m_i-n_i+1 + \varepsilon(G_i)}{2}$ for $i=1,2$ and 
either {\rm (i)} at least one $G_i$ is a tree or 
{\rm (ii)} $G_1$ contains exactly one cycle and the length of the cycle is odd
 and $G_2$ is bipartite, or vice versa.
\end{Lemma}

\begin{proof}
Note that 
$\varepsilon(G) = \varepsilon(G_1) \cdot \varepsilon(G_2)$.
Since every 4-cycle is included in a block of $G$, we have
$ c_4(G) =c_4(G_1) + c_4(G_2) $.

\bigskip

\noindent
{\bf Case 1} ($G_2$ is a tree){\bf .}
Then $c_4(G_2) =
\binom{m_2-n_2+1 + \varepsilon(G_2)}{2}=0$,
$c_4(G) =c_4(G_1)$ and $\varepsilon(G) = \varepsilon(G_1) $.
Hence (\ref{awase}) holds.
Moreover, equality holds if and only if $c_4(G_1) = \binom{m_1-n_1+1 + \varepsilon(G_1)}{2}$.

\bigskip

\noindent
{\bf Case 2} ($G_i$ is not a tree for $i \in \{1,2\}$){\bf .}
It then follows that 
\begin{eqnarray*}
 & & \binom{(m_1+m_2) - (n_1+n_2 -1) +1+\varepsilon(G)}{2} - c_4(G) \\
&\ge&
\binom{(m_1+m_2) - (n_1+n_2) +2+\varepsilon(G)}{2}\\
&& \ \ \ \ \ \  -\binom{m_1-n_1+1+\varepsilon(G_1)}{2}-\binom{m_2-n_2+1+\varepsilon(G_2)}{2}\\
\\
&=& 
\begin{cases}
(m_1-n_1+1) (m_2-n_2+1) & \mbox{ if } \varepsilon(G_1)=\varepsilon(G_2),\\
(m_1-n_1) (m_2-n_2+1) & \mbox{ if } \varepsilon(G_1) =0 \mbox{ and } \varepsilon(G_2)=1,\\
(m_1-n_1+1) (m_2-n_2) & \mbox{ if } \varepsilon(G_1) =1 \mbox{ and } \varepsilon(G_2)=0
\end{cases}
\end{eqnarray*}
is nonnegative.
Thus we have equation (\ref{awase}).

Since $G_i$ is not a tree for $i=1,2$, 
both $m_1-n_1+1$ and $m_2-n_2+1$ are positive.
If $\varepsilon(G_1)=\varepsilon(G_2)$, then
$(m_1-n_1+1) (m_2-n_2+1) $ is not zero.
Suppose that $\varepsilon(G_1) =0$ and $\varepsilon(G_2)=1$.
Then $(m_1-n_1) (m_2-n_2+1)=0$
 if and only if 
$m_1=n_1$
 if and only if 
$G_1$ contains exactly one cycle and the length of the cycle is odd
 and $G_2$ is bipartite.
The case when $\varepsilon(G_1) =1$ and $\varepsilon(G_2)=0$ is similar.
\end{proof}

We now give a purely combinatorial proof of 
Proposition~\ref{anotherproof} for bipartite graphs.

\begin{proof}[Proof of Proposition~\ref{anotherproof}]
We proceed by induction on $m$.
Let $G$ be a bipartite graph such that
$
c_4(G) > \binom{m-n+2}{2}
$
with minimal $m$.
By Lemma \ref{2connected}, we may assume that 
$G$ has no cut vertices (and so no bridges).
Let $G'$ be a (connected) subgraph of $G$ obtained by deleting an edge $e_0$ of $G$.
Let $c_4(e_0)$ denote the number of 4-cycles of $G$ containing $e_0$.
By the induction hypothesis, we have
$$
c_4(G') \le \binom{(m-1)-n+2}{2}
= 
\binom{m-n+2}{2} - (m-n+1).
$$
Hence $m-n+2 \le c_4(e_0)$.
Since all edges satisfy this condition,
$$
c_4(G) \ge \frac{m(m-n+2)}{4}
>\frac{m(m-n+1)}{4}.
$$
This contradicts Proposition \ref{fromTZ}
and hence (\ref{Sydney}) holds.

On the other hand, we have
$$
c_4(K_{2,n-2})  = \binom{n-2}{2}
=
\binom{2(n-2) - n + 2}{2} 
.$$
Conversely, 
let $G$ be a bipartite graph with
$
c_4(G) =  \binom{m-n+2}{2}
$
having no vertices of degree $1$.
By Lemma \ref{2connected}, $G$ has no cut vertices (and so no bridges)
since $G$ has no vertices of degree $1$ and no odd cycles.
Let $G'$ be a (connected) subgraph of $G$ obtained by deleting an edge $e_0$ of $G$.
Since 
$$
c_4(G') \le \binom{(m-1)-n+2}{2}
= c_4(G) - (m-n+1)
$$
holds, it follows that $m-n+1 \le c_4(e_0)$ for each edge $e_0 \in E(G)$.
Thus $\frac{m(m-n+1)}{4} \le c_4(G)$.
By Proposition \ref{fromTZ} (i), 
$c_4(G) =\frac{m(m-n+1)}{4}$ and hence $G$ is a complete bipartite graph.
Since $G$ is not a tree,  $m-n+1\neq 0$.
Hence
$\frac{(m-n+1)(m-n+2)}{2}=\frac{m(m-n+1)}{4}$ if and only if $m=2(n-2)$.
It then follows that $G$ is the complete bipartite graph $K_{2, n-2}$.
\end{proof}

\section{Upper bound for nonbipartite graphs}

In this section,
we prove the main theorem  (Theorem \ref{main}) of the present paper.
Theorem~\ref{main} will be proved by induction on the number of edges,
and Propositions \ref{deltafour} and \ref{induction} will play 
important roles in the proof.
In order to show these propositions, 
we use the following theorem, lemma, and propositions
for nonbipartite graphs.
\begin{itemize}
    \item 
    Proposition \ref{withoutk4} states that 
    $c_4(G) \le \binom{m-n+1}{2}$
    if $k_4(G) \le 1$.
    The proof of this proposition is similar to the proof of Proposition \ref{anotherproof}.
    In addition, the argument in the proof will be used for the proof of Proposition \ref{induction}.

    \item
    Lemma \ref{large:w(G)} states that 
    $c_4(G) \le \binom{m-n+1}{2}$
    if $K_{n-1}$ is a subgraph of $G$.

\item
Theorem \ref{msthm} is Motzkin--Straus Theorem for the maximum value 
of the function
$\sum_{\{v_i,v_j\}\in E(G)} x_i x_j$.
We will give a sketch of the proof for the readers.

\item
Proposition \ref{omegadelta} gives 
an upper bound for $c_4(G)$
in terms of $m$, $n$, the minimum degree,
the clique number, and the sum $\Sigma_2$
of the squares of the degrees for $G$.
We will give a proof of
Proposition \ref{omegadelta}
by using
Motzkin--Straus Theorem.
This will be useful because upper bounds
(\ref{referee2}) and (\ref{referee1}) for $\Sigma_2$
in terms of 
$m$, $n$, the minimum degree,
and the maximum degree are known.

\item
Proposition \ref{deltafour}
states that $c_4(G) \le \binom{m-n+1}{2}$
if 
the minimum degree,
and the maximum degree of a graph 
satisfy some conditions.
The main tool for the proof
is Proposition \ref{omegadelta}.

\item
Proposition \ref{induction}
states that $c_4(G) \le \binom{m-n+1}{2}$
if $G$ satisfies some conditions
on the minimum degree,
the maximum degree,
and any nonbipartite subgraph $G \setminus v$
or $G\setminus e$ with 
$v \in V(G)$, $e \in E(G)$ satisfies such an inequality.
Again, the main tool for the proof
is Proposition \ref{omegadelta}.
    
\end{itemize}

First, we show that, if $G$ has at most one $K_4$, then $c_4(G) \le \binom{m-n+1}{2}$.
Given a graph $G$, a {\em block} of $G$ is a maximal connected subgraph of $G$ with no cut vertices.

\begin{Proposition}
\label{withoutk4}
Let $G$ be a connected graph having at least one odd cycle.
If $k_4(G) \le 1$, then we have
$$
c_4(G) \le \binom{m-n+1}{2}.
$$
When $G$ has no vertices of degree $1$ and $k_4(G)=0$,
equality holds if and only if $G$ is one of the following{\rm :}
\begin{itemize}
\item[(a)]
 an odd cycle{\rm ;}
\item[(b)]
the union of $K_{2, n'}$ and a path $P$
where common vertices of $K_{2, n'}$ and $P$ are
end vertices of $P${\rm ;}

\item[(c)]
a graph whose set of blocks consists of one $K_{2,n'}$, one odd cycle and some bridges.
\end{itemize}
\end{Proposition}

\begin{proof}
We proceed by induction on $m$.
Let $G$ be a connected graph having at least one odd cycle
such that $k_4(G) \le 1$ and 
$
c_4(G) > \binom{m-n+1}{2}
$
with minimal $m$.
By Lemma \ref{2connected}, we may assume that $G$ has no cut vertices.
Let $G'$ be a (connected) subgraph of $G$ obtained by deleting an edge $e_0$ of $G$.
Then we claim the following:

\medskip

\noindent
{\bf Claim 1.} 
$G'$ is not bipartite.

If $G'$ is bipartite, then $e_0$ joins two vertices in the same part in $G'$
since $G$ is not bipartite.
Hence there exists no $C_4$ that contains $e_0$.
Thus
$$
c_4(G) = c_4(G') \le \binom{(m-1)-n+2}{2}
= \binom{m-n+1}{2}
$$
which is a contradiction.
Therefore $G'$ is not bipartite.

\medskip

\noindent
{\bf Claim 2.} 
$m-n+1 \le c_4(e_0)$.

By the induction hypothesis, we have
$$
c_4(G') \le \binom{(m-1)-n+1}{2}
= \binom{m-n+1}{2} - (m-n).
$$
Hence $m-n+1 \le c_4(e_0)$.

\medskip

Since every edge $e_0$ of $G$ satisfies $m-n+1 \le c_4(e_0)$,
$$
c_4(G) \ge \frac{m(m-n+1)}{4}.
$$
However, by Proposition \ref{fromTZ},
$$
c_4(G) \le \frac{m(m-n)}{4} <  \frac{m(m-n+1)}{4},
$$
if $G$ has no $K_4$, and
$$
c_4(G) < \frac{m(m-n)}{4} + \frac{3}{2}
\le \frac{m(m-n)}{4} + \frac{m}{4}
=
\frac{m(m-n+1)}{4}
$$
if $G$ has one $K_4$ (and hence $m\ge 6$).
This is a contradiction and hence 
$c_4(G) \le \binom{m-n+1}{2}$.

On the other hand, let $G$ be a connected graph having at least one odd cycle 
such that
$
c_4(G) = \binom{m-n+1}{2}
$,
$k_4(G) = 0$,
and $G$ has no vertices of degree $1$.

\medskip

\noindent
{\bf Case 1.}
($G$ has a cut vertex.)
Since $G$ is not bipartite, $\varepsilon(G) =0$.
Since $G$ has no vertices of degree $1$,
by Lemma \ref{2connected},
$G = G_1 \cup G_2$
where 
\begin{itemize}
\item
$G_1$ and $G_2$ have exactly one common vertex,
\item
$G_1$ contains exactly one cycle and the length of the cycle is odd, and
\item
$G_2$ is a bipartite graph with $n'$ vertices and $m'$ edges such that $c_4(G_2) = \binom{m'-n'+1}{2}$.
\end{itemize}
Moreover, by Proposition~\ref{anotherproof}, 
$G_2$ is a graph whose set of blocks
consists of one $K_{2,\ell}$ and some bridges.
Thus $G$ satisfies (c).

\medskip

\noindent
{\bf Case 2.}
($G$ has  no cut vertices.)
Suppose that there exists an edge $e_0$ of $G$
such that the subgraph $G'$ of $G$ obtained by deleting an edge $e_0$ of $G$ is bipartite.
Then there exists no $C_4$ that contains $e_0$.
Thus we have
$$
c_4(G') = c_4(G) = \binom{m-n+1}{2}=
\binom{(m-1)-n+2}{2}.
$$
Since $G'$ is a bipartite graph having $n$ vertices and $m-1$ edges, by Proposition~\ref{anotherproof},
$G'$ is either a tree or the complete bipartite graph $K_{2, n'}$ 
(by removing vertices of degree $1$).
Therefore $G$ satisfies either (b) or (c).
Suppose that for any edge $e_0$ of $G$,
the subgraph $G'$ of $G$ obtained by deleting $e_0$ has at least one odd cycle.
Since $k_4(G')=0$, we have
$$
c_4(G') \le \binom{(m-1)-n+1}{2}
= c_4(G) - (m-n).
$$
Hence $m-n \le c_4(G)- c_4(G')= c_4(e_0)$ for each edge $e_0$ of $G$.
Thus $c_4(G) \ge \frac{m(m-n)}{4}$.
Since $k_4(G) = 0$, 
by Proposition \ref{fromTZ} (ii), $c_4(G) \le \frac{m(m-n)}{4}$ and hence
$c_4(G)=\frac{m(m-n)}{4}$.
From Proposition~\ref{fromTZ} (ii),
$G$ contains exactly one cycle and the length of the cycle is odd.
Since $G$ has no vertices of degree $1$, $G$ is an odd cycle.
This contradicts the hypothesis that $G'$ has at least one odd cycle.
\end{proof}

A {\em clique} of a graph $G$ is a subgraph of $G$ that is a complete graph.
The {\em clique number} $\omega(G)$ of a graph $G$ is the number of vertices in a maximum clique of $G$.
If $G$ has at least one odd cycle and $\omega(G)$ is at most $3$, then $G$ has no $K_4$
and hence $c_4(G) \le \binom{m-n+1}{2}$.
On the other hand, if $\omega(G)$ is very large, then $c_4(G) \le \binom{m-n+1}{2}$
also holds for $G$.

\begin{Lemma}
\label{large:w(G)}
Let $G$ be a connected graph with $n \ge 4$ vertices and $\omega(G) \ge n-1$.
Then $c_4(G) \le \binom{m-n+1}{2}$.
\end{Lemma}

\begin{proof}
If $\omega(G) =n$, then $G=K_n$.
This case is proved in (\ref{completegrapheformula}) of Section 1.
If $\omega(G) =n-1$, then $K_{n-1}$ is a subgraph of $G$.
Suppose that $m=\binom{n-1}{2}+k$ where $1 \le k \le n-2$.
Then
\begin{eqnarray*}
&&
\binom{m-n+1}{2} - c_4(G) \\
&=&
\frac{(\binom{n-1}{2}+k-n)(\binom{n-1}{2}+k-n+1)}{2}
-
3 \binom{n-1}{4} - (n-3) \binom{k}{2}\\
&=&
\frac{(n-4) (k-1) (n-1-k)}{2}  \ge 0.
\end{eqnarray*}
Thus $c_4(G) \le \binom{m-n+1}{2}$.
\end{proof}

Next, we give an upper bound for $c_4(G)$
in terms of several parameters on $G$ 
(Proposition \ref{omegadelta}).
The following theorem will play an important role 
for the proof of Proposition~\ref{omegadelta}. 

\begin{Theorem}[Motzkin--Straus \cite{MSthm}]
\label{msthm}
Let $G$ be a graph on the vertex set $\{v_1,\ldots,v_n\}$.
Then
$$
\max \left\{ \sum_{\{v_i,v_j\}\in E(G)} x_i x_j \ \left| \ x_i \ge 0, \sum_{i=1}^n x_i=1 \right. \right\} = \frac{1}{2} \left( 1 - \frac{1}{\omega(G)} \right)
.$$
\end{Theorem}

\begin{proof}[Sketch of Proof.]
%
Suppose that $(x_1,\ldots, x_n)$ with $x_i \ge 0$ and $\sum_{i=1}^n x_i=1$ attains the maximum for the function $\sum_{\{v_i,v_j\}\in E(G)} x_i x_j $.
If $x_i, x_j > 0$ ($i \ne j$) and $\{v_i, v_j\}$ is not an edge of $G$, then, by replacing $(x_i, x_j)$ with
either $(x_i+x_j,0)$ or $(0,x_i+x_j)$, 
we obtain a new $(x_1,\ldots, x_n)$ such that 
the value of the function does not decrease.
Thus we may assume that $S = \{ v_i \mid x_i >0\}$ is 
the vertex set of a clique of $G$.
Then
$$
\sum_{\{v_i,v_j\}\in E(G)} x_i x_j
=
\frac{1}{2}
\left(
\left(\sum_{v_i \in S} x_i\right)^2 - 
\sum_{v_i \in S} x_i^2 
\right)
=
\frac{1}{2}
\left(
1 -  \sum_{v_i \in S} x_i^2 
\right)
\le
\frac{1}{2}
\left(
1 -  \frac{1}{|S|}
\right),
$$
and the equality holds when $x_i = 1/|S|$ for all $v_i \in S$.
\end{proof}

Given a graph $G$ on the vertex set $\{v_1,\ldots,v_n\}$, 
let $\delta(G)$ denote the {\em minimum degree} of $G$,
and let $\Delta(G)$ denote the {\em maximum degree} of $G$.
That is,
$$\delta(G) = \min \{ \deg_G(v_i) \ | \ 1 \le i \le n\}, \ \ 
\Delta(G) = \max \{ \deg_G(v_i) \ | \ 1 \le i \le n \}.
$$
Let $$\Sigma_2 = \sum_{i=1}^n \deg_G(v_i)^2
\ \left(=\sum_{\{v_i, v_j\}  \in E(G)}(\deg_G(v_i) +\deg_G(v_j) )\right).$$
Several upper bounds for $\Sigma_2$ are known.
For example, if $\delta \le \delta(G) \le  \Delta(G) \le \Delta$, then
\begin{eqnarray}
\Sigma_2 
&\le& 
m \left(\frac{2m}{n-1}+ n-2 \right), \label{referee2}
\\
\Sigma_2 
&\le & 2m (\Delta + \delta ) - n \Delta \delta . \label{referee1}
\end{eqnarray}
Here (\ref{referee2}) is
given by de Caen \cite[Theorem 1]{s2s2}, and (\ref{referee1}) 
is 
%
given
by Das \cite[Theorem 4.3]{s2}:
\begin{eqnarray*}
\Sigma_2 
&=&
\sum_{i=1}^n (\deg_G(v_i) (\deg_G(v_i) -\delta) +\deg_G(v_i) \delta)\\
&\le&
\sum_{i=1}^n (\Delta (\deg_G(v_i) -\delta) +\deg_G(v_i) \delta)
=2m (\Delta + \delta ) - n \Delta \delta.    
\end{eqnarray*}


\begin{Proposition}
\label{omegadelta}
Let $G$ be a connected graph with $\delta(G)\ge 2$.
Then, for any $2 \le \alpha \le \delta(G)$ and
$\beta  \ge \omega(G)$, we have
\begin{equation}
c_4(G) \le \frac{(2m- \alpha n)^2}{8}\left(1 - \frac{1}{\beta}\right)+\frac{m(n-\alpha^2+1)}{4}
+\frac{\alpha -2}{4}
\Sigma_2. \label{referee06}
\end{equation}
In addition,
\begin{equation}
c_4(G) =  \frac{(2m-  \delta(G) n)^2}{8}\left(1 - \frac{1}{\omega(G)}\right)+\frac{m(n- \delta(G)^2+1)}{4}
+\frac{ \delta(G) -2}{4} \Sigma_2 \label{referee05}
\end{equation}
holds if and only if  $G$ is either $K_n$ or $K_{\ell, \ell}$ with $n=2\ell$.
\end{Proposition}

\begin{proof}
We will prove (\ref{referee06}) by applying 
Motzkin--Straus Theorem.
First we have to rewrite the expression
so that we can apply Motzkin--Straus Theorem.

Let $c_t (i,j)$ denote the number of $t$-cycles of $G$ containing $\{v_i, v_j\} \in E(G)$.
Then 
\begin{eqnarray*}
c_4(i,j) &\le& (\deg_G(v_i) - 1)(\deg_G(v_j) - 1)  - c_3(i,j)\\
&\le& (\deg_G(v_i) - 1)(\deg_G(v_j) - 1)  - (\deg_G(v_i)+\deg_G(v_j)-n)\\
&=& (\deg_G(v_i) - \alpha )(\deg_G(v_j) - \alpha ) 
+ (\alpha -2)(\deg_G(v_i) +\deg_G(v_j) )  +n-\alpha^2+1.
\end{eqnarray*}
Since $ \sum_{\{v_i, v_j\}\in E(G)} c_4 (i,j)= 4 c_4(G) $
and
$
\sum_{\{v_i, v_j\}  \in E(G)}(\deg_G(v_i) +\deg_G(v_j) )
=\Sigma_2
$, we have
\begin{eqnarray*}
c_4(G) &\le &
\frac{1}{4}
\sum_{\{v_i, v_j\} \in E(G)}
(\deg_G(v_i) - \alpha )(\deg_G(v_j) -\alpha )
+ \frac{m(n-\alpha^2+1)}{4} +  \frac{\alpha -2}{4} \Sigma_2 .
\end{eqnarray*}
Since $\alpha n \le \delta(G) n \le 2m$, we have $2 m- \alpha  n \ge 0$.
Equality holds if and only if $\alpha = \delta(G)$
and $G$ is a regular graph.
In this case, 
$\sum_{\{v_i, v_j\}  \in E(G)}
(\deg_G(v_i) - \alpha )(\deg_G(v_j) -\alpha )
=0$.
If $2 m- \alpha  n > 0$, then
let $$x_i = \frac{\deg_G(v_i) - \alpha}{2 m- \alpha  n}\ge 0$$
for each vertex $v_i$.
Since
$\sum_{i=1}^n (\deg_G(v_i) - \alpha )=
2 m-  \alpha n,$
we have $ \sum_{i=1}^n x_i = 1$.
By Motzkin--Straus Theorem,
$$
\sum_{\{v_i, v_j\}  \in E(G)} x_i x_j \ 
\le \ 
\frac{1}{2}\left(1-\frac{1}{\omega(G)}\right).$$
It follows that
\begin{eqnarray*}
c_4(G) &\le& 
\frac{(2m- \alpha n)^2}{8}\left(1 - \frac{1}{\omega(G)}\right)+\frac{m(n-\alpha^2+1)}{4}
+\frac{\alpha-2}{4} \Sigma_2\\
&\le &\frac{(2m- \alpha n)^2}{8}\left(1 - \frac{1}{\beta}\right)+\frac{m(n-\alpha^2+1)}{4}
+\frac{\alpha-2}{4}
\Sigma_2
\end{eqnarray*}
for any $\beta \ge \omega(G)$.

Next we will classify graphs $G$
satisfying (\ref{referee05}).
The complete graph $K_n$ has $m = n(n-1)/2$ edges and
satisfies
$\delta(K_n) =n-1$ and $\Sigma_2=n(n-1)^2$.
By substituting these values into the right-hand side of (\ref{referee05}), we have
\begin{eqnarray*}
&&\frac{(2m-  \delta(K_n) n)^2}{8}\left(1 - \frac{1}{\omega(K_n)}\right)+\frac{m(n- \delta(K_n)^2+1)}{4}
+\frac{ \delta(K_n) -2}{4}
\Sigma_2\\
&=&n (n- 1)(n- 2)(n- 3)/8\\
&=&c_4(K_n).
\end{eqnarray*}
On the other hand, if $G=K_{\ell, \ell}$, then 
$(n,m)=(2\ell, \ell^2)$, $\delta(G) =\ell$, and $\Sigma_2=2 \ell^3$.
By substituting these values into the right-hand side of (\ref{referee05}), we have
\begin{eqnarray*}
&&
\frac{(2m-  \delta(K_{\ell, \ell}) n)^2}{8}\left(1 - \frac{1}{\omega(K_{\ell, \ell})}\right)+\frac{m(n- \delta(K_{\ell, \ell})^2+1)}{4}
+\frac{ \delta(K_{\ell, \ell}) -2}{4}
\Sigma_2\\
&=&\frac{\ell^2(\ell-1)^2}{4}
=\binom{\ell}{2}^2
=c_4(K_{\ell, \ell}).
\end{eqnarray*}
Conversely, suppose that $G \notin \{ K_n, \; K_{\ell,\ell} \}$ satisfies (\ref{referee05}).
It then follows that
\begin{eqnarray}
c_4(i,j) &=&(\deg_G(v_i) - 1)(\deg_G(v_j) - 1)  - c_3(i,j) \label{cij4}\\
c_3(i,j) &=& \deg_G(v_i)+\deg_G(v_j)-n  \label{cij3}
\end{eqnarray}
for any edge $\{v_i, v_j\}$ of $G$.
From (\ref{cij3}), we have
$$
(\deg_G(v_i)-1)+(\deg_G(v_j)-1) -c_3(i,j)=n-2.
$$
Hence any vertex of $G$ is adjacent to either $v_i$ or $v_j$ (or both).
Moreover, from (\ref{cij4}), 
if $\{v_i, v_k\}$ and $\{v_j, v_\ell\}$ are edges of $G$ with $k \neq j$, $\ell \neq i$, $k \neq \ell$,
then $\{v_k,v_\ell\}$ is an edge of $G$.
Let $K_r$ ($r < n$) be a maximum clique of $G$.

\bigskip

\noindent
{\bf Case 1} ($r\ge3$).
Since $G$ is connected,
there exists an edge $\{v_i, v_j\}$ of $G$ such that $v_i$ is a vertex of $K_r$
and $v_j$ is not a vertex of $K_r$.
Then the above claim guarantees that 
$v_j$ is adjacent to all vertices of $K_r$.
This contradicts the hypothesis that $K_r$ is a maximum clique.

\bigskip

\noindent
{\bf Case 2} ($r=2$).
Let $\{v_i, v_j\}$ be an edge of $G$.
Since $G$ has no triangles,
any vertex of $G$ is adjacent to exactly one of $v_i$ and $v_j$.
Moreover, since $$c_4(i,j) = (\deg_G(v_i) - 1)(\deg_G(v_j) - 1),$$
$G$ is a complete bipartite graph $K_{\ell_1 ,\ell_2}$,
where $\ell_1 = \deg_G(v_i)$, $\ell_2=\deg_G(v_j)$.
We may assume that $\ell_2 \le \ell_1$.
Then $K_{\ell_1 ,\ell_2}$ has $\ell_1 + \ell_2$
vertices, $\ell_1 \ell_2$ edges,
and satisfies
\begin{eqnarray*}
\delta(K_{\ell_1 ,\ell_2})&=& \ell_2,\\
\omega(K_{\ell_1 ,\ell_2})&=&2,\\
\Sigma_2 &=& \ell_1 \ell_2^2 + \ell_2 \ell_1^2 =
\ell_1 \ell_2 (\ell_1 + \ell_2),\\
c_4(K_{\ell_1 ,\ell_2}) &=& \binom{\ell_1}{2} \binom{\ell_2}{2}.
\end{eqnarray*}
Hence the right-hand side of (\ref{referee05})
is
\begin{eqnarray*}
&&
\frac{(2 \ell_1 \ell_2- \ell_2 (\ell_1+\ell_2))^2}{16} 
+\frac{ \ell_1 \ell_2 \left(\ell_1+\ell_2-\ell_2^2+1\right)}{4}
+\frac{(\ell_2-2) \ell_1 \ell_2 ( \ell_1 + \ell_2 )}{4} 
\\
&=&
\frac{\ell_2^2 (\ell_1 -\ell_2)^2}{16} 
+ c_4(K_{\ell_1,\ell_2}) .
\end{eqnarray*}
Thus equality (\ref{referee05}) holds if and only if $\ell_1 = \ell_2$.
\end{proof}

As an application, we have the following propositions that 
will play an important role for a proof of the main theorem.

\begin{Proposition}
\label{deltafour}
Let $G$ be a connected graph with $n$ vertices having at least one odd cycle.
If $G$ satisfies at least one of the following conditions,
then 
$
c_4(G) \le \binom{m-n+1}{2}.
$
\begin{enumerate}
\item[{\rm (i)}]
$\delta(G) \ge 4$ and $ \Delta(G) \le \frac{3n+1}{4}${\rm ;}

\item[{\rm (ii)}]
$\delta(G) \ge 12${\rm ;}

\item[{\rm (iii)}]
$5 \le \delta(G) \le \Delta(G) \le n-2$ and $n \le 27$.
\end{enumerate}
\end{Proposition}

\begin{proof}
By Lemma \ref{large:w(G)},
we may assume that $\omega(G) \le n-2$.
If $n \le 5$, then $G$ has no $K_4$
and hence the inequality holds
from Proposition \ref{withoutk4}.
We may assume that $n \ge 6$.
Let $\delta = \delta(G)$ and $\Delta = \Delta(G)$.
We will use Proposition~\ref{omegadelta} 
together with (\ref{referee2}) and (\ref{referee1})
for the proof.

(i)
By (\ref{referee1}),
we have
$
\Sigma_2
\le 2m(\Delta+4) -4n \Delta$
since $\delta \ge 4$.
By this and Proposition \ref{omegadelta}, it follows that
\[
c_4(G) \le \frac{(m- 2 n)^2}{2}\left(1 - \frac{1}{\omega(G)}\right)+\frac{m(n-15)}{4}
+ m(\Delta+4) - 2 n \Delta.
\]
Hence
\begin{eqnarray*}
&&\binom{m-n+1}{2} - c_4(G) \\
& \ge & \binom{m-n+1}{2} - 
\left(
\frac{(m- 2 n)^2}{2}\left(1 - \frac{1}{\omega(G)}\right)+\frac{m(n-15)}{4}
+ m(\Delta+4) - 2 n \Delta
\right)\\
&=& \frac{(m-2 n) ( (m- 2 n)-\frac{\omega(G)}{2}( 4 \Delta- 3n-1) )}{2 \omega(G)}.
\end{eqnarray*}
Since $\delta \ge 4$, we have $m \ge 2n$.
Thus $\binom{m-n+1}{2} - c_4(G) \ge 0$ if
$ \Delta \le \frac{3n+1}{4}$ holds.

(ii)
Let $\delta(G) \ge 12$.
Then $n \ge 13$ and $m \ge 6n$.
By substituting $\alpha=4$ and $\beta = n-2$ in the inequality 
in Proposition \ref{omegadelta} and (\ref{referee2}),
we have
\begin{eqnarray*}
& & 
\binom{m-n+1}{2}-c_4(G)\\
& \ge &
\binom{m-n+1}{2}-
\left(
\frac{( m- 2n)^2}{2}\left(1 - \frac{1}{n-2}\right)+\frac{m(n-15)}{4}
+\frac{1}{2}
m \left(\frac{2 m}{n-1}+n-2\right)
\right)\\
&=&
\frac{1}{2(n-2)}
\left(
-\frac{ n-3 }{ n-1 }m^2 
+
\frac{ (n-3)(n+14)}{2}m
-
n (3 n^2-9 n-2)
\right).
\end{eqnarray*}
Let
$$\varphi(x) = -\frac{n-3 }{ n-1 }x^2 
+
\frac{ (n-3)(n+14)}{2}x
-
n (3 n^2-9 n-2).$$
Since $n \ge 13$,
\begin{eqnarray*}
\varphi(6n)&=&
\frac{2 n \left(3 n^2-29 n+62\right)}{n-1} >0\\
\varphi \left( \binom{n}{2} \right)&=&
\frac{n(n-5)^2}{2} > 0.
\end{eqnarray*}
Thus 
$\varphi(x) >0$ for all $6n \le x \le \binom{n}{2}$.
Since $6n \le m \le \binom{n}{2}$, it follows that
$
c_4(G) \le \binom{m-n+1}{2} 
$.

(iii)
Note that $n\ge 7$ since $5 \le n-2$.
Since $5 \le \delta \le \Delta \le n-2$,
by substituting $\alpha=5$ and $\beta = n-2$ in the inequality 
in Proposition \ref{omegadelta} and (\ref{referee1}), 
we have
\begin{eqnarray*}
& & 
\binom{m-n+1}{2}-c_4(G)\\
& \ge &
\binom{m-n+1}{2}-
\left(
\frac{( m- \frac{5n}{2})^2}{2}\left(1 - \frac{1}{n-2}\right)+\frac{m(n-24)}{4}
+\frac{3}{4}
 \left(2m (n+3) -5n (n-2) \right)
\right)\\
&=&
\frac{1}{2(n-2)}
\left( m^2 
-\frac{1}{2} \left(n^2+16\right)m
+
\frac{1}{4} n \left(9 n^2-57 n+128\right)
\right).
\end{eqnarray*}
The minimum value of this function
is
$$
\frac{1}{32} \left(-n^3+34 n^2-192 n+128\right)
=
\frac{1}{32} \left((27-n) \left(n^2-7 n+3\right)+47\right) 
$$ 
when $m = \frac{1}{4} \left(n^2+16\right)$
and this minimum value is positive for $n=7,8,\dots,27$.
\end{proof}

\begin{Proposition}
\label{induction}
Let $G$ be a $2$-connected graph with $n \ge 6$ vertices and $m$ edges that has at least one odd cycle.
Let $ \delta =\delta(G) \ge 2$ and $\Delta = \Delta(G)$.
Suppose that 
$c_4(H) \le \binom{m'-n'+1}{2}$
for every graph $H$ with $n'$ vertices and $m'$ edges
having at least one odd cycle
obtained by deleting either a vertex or an edge of $G$. 
If $G$ satisfies at least one of the following conditions, then $c_4(G) \le \binom{m-n+1}{2}${\rm :}
\begin{enumerate}
\item[{\rm (i)}]
$\Delta = n-1${\rm ;}
\item[{\rm (ii)}]
$\delta \ge 3$ and 
$m <  \delta(n-\delta) ${\rm ;}
\item[{\rm (iii)}]
$
m \ge  \frac{ ( \delta + 2)  n - (3 \delta +1)}{2} +\frac{1}{2(\delta-1)}
${\rm ;}

\item[{\rm (iv)}]
$\delta \le 3$.
\end{enumerate}
\end{Proposition}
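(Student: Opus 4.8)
The plan is to argue by contradiction: suppose $c_4(G)>\binom{e-d+1}{2}$. As a preliminary reduction, a vertex of degree $1$ can be deleted --- the resulting graph $G'$ is connected, has $d-1$ vertices, $e-1$ edges, still contains an odd cycle, and satisfies $c_4(G')=c_4(G)$, so the hypothesis forces $c_4(G)=c_4(G')\le\binom{(e-1)-(d-1)+1}{2}=\binom{e-d+1}{2}$, a contradiction; combined with Lemma~\ref{2connected} this lets me assume $G$ is $2$-connected, so that deleting any single edge or vertex keeps $G$ connected.

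I would then build two ``deletion engines''. \emph{Edge engine.} Fix an edge $e_0=\{x,y\}$ and put $G'=G-e_0$. If $G'$ is bipartite, then the odd cycle of $G$ must use $e_0$, so $x,y$ lie in the same class of a bipartition of $G'$; the parity count used in the proof of Theorem~\ref{withoutk4} then shows no $4$-cycle of $G$ contains $e_0$, whence $c_4(G)=c_4(G')\le\binom{(e-1)-d+2}{2}=\binom{e-d+1}{2}$ by Proposition~\ref{anotherproof}, a contradiction. So $G'$ has an odd cycle, the hypothesis gives $c_4(G')\le\binom{e-d}{2}$, and the number $c_4(e_0)$ of $4$-cycles through $e_0$ satisfies $c_4(e_0)=c_4(G)-c_4(G')>\binom{e-d+1}{2}-\binom{e-d}{2}=e-d$; thus $c_4(e_0)\ge e-d+1$ for \emph{every} edge. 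Summing over the $e$ edges ($\sum_{e_0}c_4(e_0)=4c_4(G)$) yields the global bound $c_4(G)\ge e(e-d+1)/4$, while combining $c_4(e_0)\ge e-d+1$ with the estimate $c_4(e_0)\le(\deg x-1)(\deg y-1)-c_3(x,y)\le(\deg x-1)(\deg y-1)-(\deg x+\deg y-d)$ from the proof of Theorem~\ref{omegadelta} gives the local constraint $(\deg x-2)(\deg y-2)\ge e-2d+4$ for every edge $\{x,y\}$. \emph{Vertex engine.} Fix $w$, put $G'=G-w$. Whether $G'$ is bipartite (use Proposition~\ref{anotherproof}) or not (use the hypothesis), $c_4(w)=c_4(G)-c_4(G')$ is bounded below explicitly, the bound being $c_4(w)\ge\frac12(\deg w-1)\bigl(2(e-d+1)-\deg w\bigr)$ when $G'$ is non-bipartite. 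On the other hand $c_4(w)=\sum_{b\ne w}\binom{|N(b)\cap N(w)|}{2}$ and $|N(b)\cap N(w)|\le\deg w$, so $c_4(w)\le\frac12(\deg w-1)\bigl(\sum_{v\sim w}\deg v-\deg w\bigr)$; applied at a minimum-degree vertex $w$ (the case $G-w$ bipartite being disposed of separately) this forces $\sum_{v\sim w}\deg_G v>2(e-d+1)$.

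With these facts available I would dispose of the four regimes. First, if $k_4(G)=0$ then Proposition~\ref{fromTZ}(ii) gives $c_4(G)\le e(e-d)/4<e(e-d+1)/4$, contradicting the global bound, and if $k_4(G)=1$ then Theorem~\ref{withoutk4} gives the conclusion outright; so I may assume $k_4(G)\ge2$, and also (by Examples~\ref{complete} and~\ref{large:w(G)}) that $\omega(G)\le d-2$, so Theorem~\ref{omegadelta} applies with $\beta=d-2$. Then: \emph{(iii)} when $e$ is large, combining $c_4(G)\ge e(e-d+1)/4$ with Theorem~\ref{omegadelta} (choosing $\alpha$ close to $\delta$ and $\Sigma_2\le 2e(\Delta+\delta)-d\Delta\delta$) yields a quadratic inequality in $e$ that is violated exactly when $e$ exceeds the stated threshold, in the spirit of the proof of Corollary~\ref{deltafour}; \emph{(ii)} when $\delta\ge3$ and $e<\delta(d-\delta)$, feeding $\sum_{v\sim w}\deg v>2(e-d+1)$ and the per-edge constraint $(\delta-2)(\deg v-2)\ge e-2d+4$ into the neighbourhood of a minimum-degree vertex $w$, together with $\sum_{v\sim w}\deg v\le e+\binom{\delta}{2}$, produces the required contradiction; \emph{(i)} when $\Delta=d-1$, the per-edge constraints at the edges through the universal vertex force every other degree to be large, and this is then played against $c_4(G)\ge e(e-d+1)/4$ and Theorem~\ref{omegadelta}; \emph{(iv)} when $\delta\le3$, the case $\delta\le1$ is the pendant reduction above, and for $\delta\in\{2,3\}$ the per-edge constraint at a minimum-degree vertex makes $e$ small (for $\delta=2$ it forces $e\le2d-4$), after which $c_4(G)\ge e(e-d+1)/4$ together with Proposition~\ref{fromTZ}(iii) and Theorem~\ref{omegadelta} with $\alpha=2$ close the gap; here the subcase $\delta=2$ with $G-w$ bipartite is handled directly, since then $G$ minus an edge at $w$ is already bipartite and the parity argument applies.

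The step I expect to be the main obstacle is the regime $k_4(G)\ge2$. Once $G$ may contain many copies of $K_4$, the crude bound of Proposition~\ref{fromTZ}(iii) is too weak to contradict $c_4(G)\ge e(e-d+1)/4$, so one is forced into the sharper Theorem~\ref{omegadelta}; the delicate part is the bookkeeping --- selecting $\alpha$, picking the right upper estimate for $\Sigma_2$, and tracking which deletions produce bipartite graphs --- and verifying that the four parameter regimes (i)--(iv) are exactly those in which these elementary estimates suffice, the genuinely tight middle range of $(\delta,\Delta,d,e)$ being left to Corollary~\ref{deltafour}.
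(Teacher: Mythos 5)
Your setup matches the paper's: argue by contradiction, reduce to the $2$-connected case, and use the edge-deletion engine to get $c_4(e_0)\ge e-d+1$ for every edge, hence $c_4(G)\ge e(e-d+1)/4$ and the per-edge constraint $(\deg x-2)(\deg y-2)\ge e-2d+4$. Your treatment of (ii) is essentially the paper's argument (the paper combines $\tfrac12\delta(d+\alpha)\le e$ with $e-d+1\le(\delta-1)(\delta+\alpha-1)$ at a minimum-degree vertex), and your observation that $k_4(G)\le 1$ is already settled by Theorem~\ref{withoutk4} is fine. The gap is in cases (i), (iii) and the completion of (iv), where you replace the paper's specific counting arguments by the scheme ``play $c_4(G)\ge e(e-d+1)/4$ against Theorem~\ref{omegadelta}.'' That scheme cannot work in the dense regime that (i) and (iii) must cover: for $e$ near $\binom{d}{2}$ and $\omega(G)=d-2$ (which is all that Examples~\ref{complete} and \ref{large:w(G)} let you exclude), the bound of Theorem~\ref{omegadelta} with $\beta=d-2$ is close to $c_4(K_d)=\binom{e-d+1}{2}$, which is roughly \emph{twice} $e(e-d+1)/4$, so there is no contradiction. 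Note also that condition (iii) holds for \emph{all} $e$ above its threshold, including $e$ close to $\binom{d}{2}$; a quadratic inequality of the shape used in Corollary~\ref{deltafour} is only verified on a bounded interval of $e$ and does not give a statement of the form ``for all $e$ above a threshold.''

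The ideas you are missing are precisely the ones the paper uses in those cases. For (i) it deletes the universal vertex $v$ and uses the exact identity $c_4(G)=c_4(G')+\sum_{v'\in V(G')}\binom{\deg_{G'}(v')}{2}$ together with the bound $\Sigma_2\le e\bigl(\tfrac{2e}{d-1}+d-2\bigr)$ applied to $G'$, yielding $\binom{e-d+1}{2}-c_4(G)\ge \frac{(\binom{d}{2}-e)(e-2d+3)}{d-2}\ge 0$ directly, with no appeal to Theorem~\ref{omegadelta}. For (iii) it deletes a minimum-degree vertex and bounds the number of $4$-cycles through it by $(d-4)\binom{\delta}{2}+\lfloor\delta/2\rfloor$, the key point being that $s_{ij}=s_{ik}=d-3$ forces $\deg v_i=d-1$, excluded by (i), so at most $\lfloor\delta/2\rfloor$ pairs attain $s_{ij}=d-3$; this refinement is exactly what produces the threshold $\frac{(\delta+2)d-(3\delta+1)}{2}+\frac{1}{2(\delta-1)}$. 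For (iv) with $\delta=2$, your deduction $e\le 2d-4$ is true but not what is needed; the paper instead observes that the $k$ $4$-cycles through a degree-$2$ vertex all pass through its two neighbours, so their union is a $K_{2,k+1}$ with $2(k+1)$ edges, and an edge count (using $2$-connectivity and the odd cycle) gives $e\ge d+k$, contradicting $e\le d+k-1$ from the per-edge bound. For $\delta=3$ the paper combines (ii) and (iii) to force $d\in\{6,7\}$ and finishes by explicit analysis. None of these three mechanisms is recoverable from your ``global bound versus Theorem~\ref{omegadelta}'' template, so as written the proposal does not prove (i), (iii) or (iv).
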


\begin{proof}
First, we explain the reason why we may assume that
any subgraph $H$ of $G$ obtained by 
removing either a vertex or an edge from $G$
satisfies $c_4(H) \le \binom{m'-n'+1}{2}$.
Let $H$ be 
an induced subgraph of $G$ obtained by removing 
a vertex $v\in V(G)$ from $G$.
If $H$ has no triangles, then $G$ has no $K_4$,
and hence Proposition~\ref{withoutk4} guarantees $c_4(G) \le \binom{m-n+1}{2}$.
Thus we may assume that such an induced subgraph $H$
has a triangle, and hence $H$ 
satisfies $c_4(H) \le \binom{m'-n'+1}{2}$
by the hypothesis.
On the other hand, 
let $H$ be 
a subgraph of $G$ obtained by removing 
an edge $e_0\in E(G)$ from $G$.
By the same argument in Claim 1 in the proof of Proposition~\ref{withoutk4},
we may assume that $H$ is not bipartite.
By the hypothesis, 
$H$ satisfies 
$c_4(H) \le \binom{m'-n'+1}{2}$.

In addition, by the same argument in Claim 2 in the proof of Proposition~\ref{withoutk4}, 
we may assume that any edge $e_0$ is contained in at least $(m-n+1)$ 4-cycles in $G$.
This fact will play an important role for (ii) and (iv).

(i) 
Let $v$ be a vertex of $G$ of degree $n-1$ and 
let $G'$ an induced subgraph of $G$ obtained by removing $v$ from $G$.
Then $G'$ is connected, not bipartite, and
has $n-1$ vertices and $m-n+1$ edges.
Then $G'$ satisfies 
$c_4(G') \le \binom{(m-n+1) - (n-1) + 1}{2}$, 
and by applying (\ref{referee2}) to 
$\sum_{v' \in V(G')} \deg_{G'} (v')^2$,
it follows that
\begin{eqnarray*}
c_4(G) &=& c_4(G') + \sum_{v' \in V(G')} \binom{\deg_{G'} (v')}{2} \\
&\le &  \binom{m-2n+3}{2}
+ \frac{1}{2}\sum_{v' \in V(G')} \left(\deg_{G'} (v')^2 -\deg_{G'} (v')\right)\\
&\le& \binom{m-2n+3}{2} +
\frac{1}{2} (m-n+1) \left(\frac{2 (m-n+1)}{n-2}+n-3\right)
-(m-n+1)\\
&= & \binom{m-n+1}{2} - \frac{\left(\binom{n}{2}-m\right) (m-2 n+3)}{n-2}.
\end{eqnarray*}
Since $G'$ is a connected graph with $n-1$ vertices and $m-n+1$ edges,
it follows that $(n- 1) -1 \le m-n+1$.
Hence $2n -3 \le m \le \binom{n}{2} $.
Thus $c_4(G) \le \binom{m-n+1}{2}$.

(ii)
Suppose that $G$ satisfies $c_4(G) > \binom{ m-n+1 }{2}$, $\delta \ge 3$ and 
$m < \delta(n-\delta) $.
Let $v$ be a vertex of $G$ such that the degree of a vertex $v$ is $\delta$.
Suppose that $v_1,\ldots, v_\delta$ are incident with $v$ in $G$.
Let $\deg_G(v_i) = \delta + \alpha_i$ for $i=1,2,\dots,\delta$ and $\alpha = \min\{\alpha_1,\ldots, \alpha_\delta\}$.
Then 
\begin{equation}
\frac{1}{2}\delta(n+\alpha) 
\le
\frac{1}{2}
\left(
\sum_{i=1}^\delta (\delta + \alpha) 
+
\sum_{i=\delta + 1}^n \delta 
\right)
\le \frac{1}{2}\sum_{i=1}^n \deg_G(v_i) =  m. \label{referee3}
\end{equation}
Suppose that $\alpha = \alpha_j$.
Let
$c_t (v,v_j)$ denote the number of $t$-cycles of $G$ containing $\{v, v_j\}$.
Then we have
$$m-n+1 \le c_4(v,v_j).$$
In addition, as in the proof of Proposition~\ref{omegadelta},
it follows that
$$c_4 (v,v_j) \le 
(\deg_G(v) - 1)(\deg_G(v_j) - 1)  - c_3(v,v_j)
\le (\deg_G(v) - 1)(\deg_G(v_j) - 1).
$$
Thus
$$
m-n+1 \le  c_4(v,v_j) \le (\deg_G (v) - 1)(\deg_G(v_j) - 1)
=(\delta - 1)(\delta + \alpha - 1)
$$
and hence
\begin{equation}
m \le (\delta - 1)(\delta + \alpha - 1) + n - 1.\label{referee4}
\end{equation}
From (\ref{referee3}) and (\ref{referee4}),
$$
\frac{1}{2} \delta(n+\alpha) \le m \le (\delta - 1)(\delta + \alpha - 1) +n-1.
$$
This inequality simplifies to
$
(\delta -2) (n -2\delta)
\le
(\delta - 2)\alpha
$.
Since $\delta \ge 3$ by hypothesis, 
canceling implies
$
n -2 \delta \le \alpha 
$.
Thus $2 (n-\delta)  \le n+\alpha$ and hence
$$
\delta(n-\delta) \le 
\frac{1}{2}\delta(n+\alpha) \le  m,$$
which contradicts the hypothesis $m < \delta(n-\delta)$.

(iii)
We may assume that $\Delta \le n-2$ by (i).
Let $v$ be a vertex of $G$ of degree $\delta$
that is adjacent to $v_1,\ldots, v_\delta$
and let $G'$ an induced subgraph of $G$ obtained by deleting $v$.
Note that the number of copies of $C_4$ that contains $v$ is equal to $\sum_{1 \le i < j \le \delta} s_{ij}$,
where 
$$s_{ij}= | \{v' \ | \  \{v_i,v'\}, \{v_j,v'\} \in E(G), v' \ne v \}|.$$
It is trivial that $s_{ij} \le n-3$.
On the other hand, if $s_{ij} = s_{ik} = n-3$ ($j \ne k$), then
the degree of $v_i$ is $n-1$.
Hence the number of $i , j $ such that $s_{ij} = n-3$ is at most $\lfloor \delta/2 \rfloor $.
Thus the number of copies of $C_4$ that contains $v$ is at most $(n-4) \binom{\delta}{2} +  \left\lfloor \frac{\delta}{2} \right\rfloor$.
Therefore
$$
c_4(G) \le c_4(G') + (n-4) \binom{\delta}{2} +\left\lfloor \frac{\delta}{2} \right\rfloor.
$$
Since $G'$ satisfies 
$ c_4(G') \le \binom{(m-\delta)-(n-1)+1}{2}$,
we have
\begin{eqnarray*}
 &  & \binom{m-n+1}{2} - c_4(G) \\
& \ge & \binom{m-n+1}{2} - \binom{(m-\delta)-(n-1)+1}{2}- (n-4) \binom{\delta}{2} - \left\lfloor \frac{\delta}{2} \right\rfloor \\
&=& 
(\delta-1)
\left( m - \frac{ ( \delta + 2)  n - (3 \delta +1)}{2}\right) - \frac{1}{2}
+
\frac{\delta}{2}
-
\left\lfloor \frac{\delta}{2} \right\rfloor.
\end{eqnarray*}
This is nonnegative if 
$$
m \ge  
\begin{cases}
\frac{ ( \delta + 2)  n - (3 \delta +1)}{2} +\frac{1}{2(\delta-1)}
& \ \ \  \delta \mbox{ is even,}\\
\frac{ ( \delta + 2)  n - (3 \delta +1)}{2} & \ \ \ \delta \mbox{ is odd.}
\end{cases}
$$

(iv)
Let $G$ be a 2-connected graph with minimum degree $\delta \in \{2,3\}$
having at least one odd cycle
such that
$$c_4(G) > \binom{ m-n+1 }{2}.$$

Suppose that $\delta =2$,
%
$\deg (v_1)=2$, and
$v_1$ is contained in $k$ 4-cycles of $G$.
Then $0 \le k \le n-3$.
Since each edge of $G$ is contained in at least $(m-n+1)$ 4-cycles of $G$, 
we have $m-n+1 \le k$ and hence $m \le n+k-1$.
The union of $k$ $C_4$'s is a complete bipartite graph $K_{2,k+1}$ with $2(k+1)$ edges.
If $k=n-3$, then $G$ has at least $2(k+1)+1=2k+3=n+k$ edges since $G$ has at least one odd cycle.
If $k <n-3$, then $G$ has at least $2(k+1) + (n-k-3) +1= n+k$ edges
 since remaining $n-k-3$ vertices are of degree $\ge 2$
and $G$ is 2-connected.
Thus we have $n+k \le m \le  n+k-1$, a contradiction.

Suppose that $\delta = 3$.
By (ii) and (iii), we may assume that 
\begin{equation}
\label{last}
3(n-3) \le m < \frac{ ( \delta + 2)  n - (3 \delta +1)}{2}
=
\frac{5}{2} n -5.
\end{equation}
Since $3 n-9< \frac{5}{2} n -5$, we have $n <8$ and hence $n = 6,7$.
If $n=6$, then $m = 9$ by (\ref{last}).
Then $G$ is a connected $3$-regular graph with $6$ vertices.
It is easy to see that $G$ has no $K_4$, a contradiction.
If $n=7$, then $m = 12$ by (\ref{last}).
Let $\{v_1, \ldots, v_7\}$ be the vertex set of $G$
with $\deg_G(v_1)=3$ and
$\{v_1,v_2\},\{v_1,v_3\},\{v_1,v_4\} \in E(G)$.
Since $m = \delta(n-\delta)$, by the argument in the proof of (ii),
the degree of $v_i$ ($i=2,3,4$) is $\delta+\alpha= \delta + n - 2\delta = 4$.
Then 
$$24= 2m =\sum_{i=1}^7 \deg_G (v_i)=  15+ \deg_G (v_5) + \deg_G (v_6) + \deg_G (v_7)
.$$
Since $\deg_G (v_i) \ge 3$, we have $\deg_G (v_i)=3$ for $i = 5,6,7$.
Note that the degree of a vertex incident 
with one of the vertices $v_1, v_5, v_6, v_7$ of degree 3 
should be 4.
Thus $G$ is the complete bipartite graph
$K_{3,4}$ on the vertex set $\{v_2,v_3,v_4\} \cup\{v_1, v_5, v_6, v_7\} $,
a contradiction.
\end{proof}

We are now in a position to prove the main theorem of the present paper.

\begin{proof}[Proof of Theorem~\ref{main}]
Let $G$ be a 2-connected graph with $n \ge 6$ vertices having at least one odd cycle
with
\[
c_4(G) >\binom{m-n+1}{2}.
\]
Suppose that the assertion holds for any connected graph $G'$
having at least one odd cycle
obtained by deleting either a vertex or an edge of $G$. 
For $n \in \{6,7,8, 9\}$, we have $\left\lfloor (3n+1)/4 \right\rfloor=n-2$.
By Proposition~\ref{deltafour} (i) and Proposition~\ref{induction} (i) and (iv), 
it follows that $n\ge 10$.
In addition, by 
Proposition~\ref{deltafour} (ii) and Proposition~\ref{induction},
we may assume that $G$ is a 2-connected graph with 
$4 \le \delta \le 11$, $\Delta \le n-2$, and 
$$
\delta(n-\delta) \le m <\frac{ ( \delta + 2)  n - (3 \delta +1)}{2} +\frac{1}{2(\delta-1)}
.$$
In particular, 
$$
\delta(n-\delta) <
\frac{ ( \delta + 2)  n - (3 \delta +1)}{2} +\frac{1}{2(\delta-1)}
$$
holds.
Since $\delta \ge 4$, we have
$$
n <
\frac{2 \delta^3-5 \delta^2+2 \delta+2}{(\delta-2) (\delta-1)}
=2\delta +1 +\frac{\delta}{(\delta-2) (\delta-1)}
<2\delta +2.
$$
If $\delta = 4$, then 
$
n < 2\delta +2 =10,
$
a contradiction.
Thus $\delta \ge 5$.
By Proposition \ref{deltafour} (iii), we have $n \ge 28$.
However, since $\delta \le 11$, we have
$n < 2\delta +2 \le 24.
$
This is a contradiction.
\end{proof}

It would, of course, be of interest to classify all connected graphs $G$ which satisfy the equality $c_4(G) = \binom{m - n + 1}{2}$.

\appendix

\section{Algebraic proof for equation (\ref{Sanda})}
\label{omake}

In this appendix, we give an algebraic proof of equation (\ref{Sanda}) in Section 1.
Let $G$ be a connected graph on the vertex set $\{v_1,\ldots, v_n\}$ whose 
edge set is $\{e_1,\ldots,e_m\}$.
Then the toric ideal $I_G$ of the edge ring $K[G]$ of $G$ is defined as follows.
Let $K[x_1,\ldots,x_m]$ and $K[t_1,\ldots,t_n]$ be
polynomial rings over a field $K$.
Define the ring homomorphism $\pi_G :  K[x_1,\ldots,x_m] \rightarrow K[t_1,\ldots,t_n]$
by $\pi_G (x_i) = t_j t_k \in K[t_1,\ldots,t_n]$ where $e_i = \{v_j, v_k\}$ for each $1 \le i \le m$,.
Then the {\em toric ring} $K[G]$ is the image of $\pi_G$,
and the {\em toric ideal} $I_G$ of $K[G]$ is the kernel of $\pi_G$.
See  \cite[Chapter 5]{binomialideals} for details.
It is known 
 \cite[Lemma 5.9]{binomialideals}
that the toric ideal $I_G$ is generated by homogeneous binomials of the form
$$
f_\Gamma = \prod_{k=1}^\ell x_{i_{2k-1}} - \prod_{k=1}^\ell x_{i_{2k}},
$$
where $\Gamma=(e_{i_1},\ldots,e_{i_{2\ell}})$ is a closed walk of even length in $G$.
In particular, $f_\Gamma \in I_G$ is quadratic if and only if $\Gamma$ is a 4-cycle contained in $G$.

Applying a result by Herzog et al.
\cite[Corollary~2.6]{HSZ} to the edge ring of $G$,
it follows that the number of quadratic binomials in a minimal set of generators of $I_G$
is less than or equal to $\binom{m-\dim K[G] +1}{2}$, where
$$
\dim K[G] =
\begin{cases}
n-1 & \mbox{if } G \mbox{ is bipartite,}\\
n &  \mbox{otherwise.}
\end{cases}
$$
If $f_\Gamma \in I_G$ is quadratic, then $f_\Gamma$ is generated by other (quadratic) binomials of $I_G$
if and only if the induced subgraph $G'$ of $G$ on the vertex set $V(\Gamma)$ is a complete graph $K_4$.
More precisely, if $G'$ is $K_4$, then $G'$ has three 4-cycles $\Gamma, \Gamma_1, \Gamma_2$
and $f_\Gamma = f_{\Gamma_1} + f_{\Gamma_2}$.
Thus the number of quadratic binomials in a minimal set of generators of $I_G$
is $c_4(G)-k_4(G)$.
If $G$ is bipartite, then $k_4(G)=0$.
Hence
$$
c_4(G)
 \leq 
\begin{cases}
\binom{m-n+2}{2} & \mbox{if } G  \mbox{ is bipartite},\\
\\
\binom{m-n+1}{2} + k_4(G) & \mbox{otherwise.}
\end{cases}
$$

\section*{Acknowledgments}
The authors would like to thank the referees for 
their careful reading and 
important advise that 
improve the writing of the present paper.


\begin{thebibliography}{99}

\bibitem{AK}
R. Ahlswede and G. O. H. Katona,
Graphs with maximal number of adjacent pairs of edges,
{\it Acta Math. Acad. Sci. Hungar.} {\bf 32} (1978), 97--120.


\bibitem{Ahrens}
W. Ahrens, \"{U}ber das Gleichungssystem einer Kirchhoffschen galvanischen Stromverzweigung,
{\it Math. Ann.} {\bf 49} (1897), 311--324.  

\bibitem{Alon01}
N. Alon, On the number of subgraphs of prescribed type of graphs with a given number
of edges, {\it Israel J. Math.} {\bf 38} (1981), 116--130.
 
\bibitem{Alon02}
N. Alon, On the number of certain subgraphs contained in graphs with a given number of
edges, {\it Israel J. Math.} {\bf 53} (1) (1986), 97--120.


\bibitem{AlonBip}
N. Alon, Testing subgraphs in large graphs,
{\it Random Struct. Alg.} {\bf 21}  (2002), 359--370.


\bibitem{BS}
B. Bollob\'{a}s and A. Sarkar,
Paths of length four,
{\it Discrete Math.} {\bf 265} (2003), 357--363.


\bibitem{s2}
K.~Ch.~Das, 
Maximizing the sum of the squares of the degrees of a graph,
{\it Discrete Math.} {\bf 285} (2004), 57--66.

\bibitem{s2s2}
D.~de~Caen,
An upper bound on the sum of squares of degrees in a graph, 
{\it Discrete Math.} {\bf 185} (1998), 245--248.

\bibitem{Erdos}
P. Erd\H{o}s, 
On the number of complete subgraphs contained in certain graphs,
{\it Magyar Tud. Akad. Mat. Kut. Int. K\"ozl} {\bf 7} (1962), 459--474.

\bibitem{HS}
S. L. Hakimi and E. F. Schmeichel, 
On the number of cycles of length $k$ in a maximal planar graph,
{\it J. Graph Theory} {\bf 3} (1979), 69--86.

\bibitem{binomialideals}
J. Herzog, T. Hibi and H. Ohsugi, 
``Binomial ideals'',
Graduate Texts in Math. {\bf 279}, Springer, Cham, 2018.

\bibitem{HSZ}
J.~Herzog, M.~Mohammadi~Saem and N.~Zamani,
The number of generators of powers of an ideal, 
{\it Internat. J. Algebra Comput.} {\bf 29} (2019), 827--847.

\bibitem{HMOS}
T. Hibi, A. Mori, H. Ohsugi and A. Shikama,
The number of edges of the edge polytope of a finite simple graph,
{\it ARS Math. Contemp.} {\bf 10} (2016) 323--332.

\bibitem{MSthm}
T.~S.~Motzkin and E.~G.~Straus,
Maxima for graphs and a new proof of a theorem of Tur\'an,
{\it Canad. J. Math.} {\bf 17} (1965), 533--540.


\bibitem{Nagy}
D. T. Nagy, On the number of $4$-edge paths in graphs with given edge density, 
{\it Combin. Probab. Comput.} {\bf 26} (2017), 431--447.


\bibitem{simpleedgepolytope}
H. Ohsugi and T. Hibi,
Toric ideals generated by quadratic binomials,
{\it J. Algebra} {\bf 218} (1999), 509--527. 

\bibitem{TZ}
T. Tran and G. M. Ziegler, Extremal edge polytopes, 
{\it Electron. J. Combin.}  {\bf 21} (2014) \# P2.57. 

\bibitem{Zbook}
G. M. Ziegler, ``Lectures on Polytopes'', 
Graduate Texts in Math. {\bf 52},
Springer-Verlag, Berlin, 1995.
\end{thebibliography}
\end{document}